\tikzstyle{pnt}=[draw,circle,fill,inner sep=1pt]
\tikzstyle{opnt}=[draw,circle,inner sep=2pt]
\numberwithin{equation}{section}
\theoremstyle{plain}
\newtheorem{theorem}{Theorem}
\newtheorem{definition}[theorem]{Definition}
\newtheorem{lemma}[theorem]{Lemma}
\newtheorem{cor}[theorem]{Corollary}
\newtheorem{remark}[theorem]{Remark}
\newcommand{\Z}{\mathbb{Z}}
\def\DT{\mathrm{DT}}
\def\S{\mathfrak{S}}
\def\L{\mathfrak{L}}
\def\D{\mathfrak{D}}
\def\pp{\operatorname{(2--13)}}
\def\Dc{\mathcal{DC}}
\def\Ec{\mathcal{EC}}
\begin{document}

\title[Cycles on a multiset with only even-odd drops]
{Cycles on a multiset with only even-odd drops}

\author[Z. Lin]{Zhicong Lin}
\address[Zhicong Lin]{Research Center for Mathematics and Interdisciplinary Sciences, Shandong University, Qingdao 266237, P.R. China}
\email{linz@sdu.edu.cn}

\author[S.H.F. Yan]{Sherry H.F. Yan}
\address[Sherry H.F.  Yan]{Department of Mathematics,
Zhejiang Normal University, Jinhua 321004, P.R. China}
\email{hfy@zjnu.cn}

\date{\today}

\begin{abstract}
For a finite subset $A$ of $\Z_{>0}$, Lazar and Wachs (2019) conjectured that the number of  cycles on $A$ with only even-odd drops  is equal to the number of D-cycles on $A$. In this note, we introduce cycles on a multiset with only even-odd drops  and prove bijectively a multiset version of their conjecture. As a consequence, the number of cycles on $[2n]$ with only even-odd drops equals the Genocchi number $g_n$. With Laguerre histories as an intermediate structure, we also construct a bijection between a class of  permutations of length $2n-1$ known to be counted by $g_n$ invented by Dumont and  the cycles on $[2n]$ with only even-odd drops.  
\end{abstract}


\keywords{Genocchi numbers; Even-odd drops; D-cycles; Laguerre histories}

\maketitle


  \section{Introduction}
 The {\em Genocchi numbers} 
 $$\{g_n\}_{n\geq1}=\{1,1,3,17,155,2073,38227, 929569,\ldots\}
 $$
 can be defined by the following exponential generating function formula
 $$
 \sum_{n\geq1}g_n\frac{x^{2n}}{(2n)!}=x\tan\frac{x}{2}.
 $$
 Let $\S_n$ be the set of permutations of $[n]:=\{1,2,\ldots,n\}$. Dumont~\cite{du} introduced the class of permutations 
 $$
 \D_{2n-1}:=\{\sigma\in\S_{2n-1}:\forall\, i\in[2n-2], \text{ $\sigma(i)>\sigma(i+1)$ if and only if $\sigma(i)$ is even}\}
 $$
 and proved that 
 \begin{equation}\label{eq:dumont}
 |\D_{2n-1}|=g_n.
 \end{equation}
  For instance, $\D_{5}=\{42135,21435,34215\}$ and so $|\D_5|=3=g_3$.
 Based on Dumont's interpretation of Genocchi numbers, the objective of this paper is to present two different bijective proofs of a conjecture due to Lazar and Wachs~\cite[Conjecture~6.4]{Wachs} which asserts that  cycles on $[2n]$ with only even-odd drops are also counted by $g_n$. Actually, we will also introduce cycles on a multiset with only even-odd drops and prove bijectively a multiset generalization of another related conjecture of Lazar and Wachs~\cite[Conjecture~6.5]{Wachs}. 
 
 Let $M$ be a finite multiset with exactly $m$ elements from $\Z_{>0}$. We introduce cycles on $M$ with only even-odd drops and D-cycles on $M$, generalizing the concepts introduced in~\cite{Wachs} for $M$ being a subset of $\Z_{>0}$. 
 \begin{definition}
 A cycle $(a_1,a_2,\ldots,a_m)$ on $M$ (i.e., as multiset $\{a_i:1\leq i\leq m\}=M$) is said to {\bf\em have only even-odd drops} if whenever $a_i>a_{i+1}$ (as a cycle, we usually assume $a_{m+1}=a_1$), then $a_i$ is even and $a_{i+1}$ is odd; it is a {\bf\em D-cycle} if $a_i\leq a_{i+1}$ when $a_i$ is odd and $a_i\geq a_{i+1}$ when $a_i$ is even for all $1\leq i\leq m$. For example, as cycles on the multiset $\{1^2,2^2,3^2,4^2\}$,  $(1,2,1,2,4,3,3,4)$ has only even-odd drops but is not a D-cycle, while   $(1,2,1,4,3,3,4,2)$ is a D-cycle that does not have only even-odd drops. Let $\Ec_M$ be the set of cycles on $M$ with only even-odd drops and let $\Dc_M$ be the set of D-cycles on $M$. 
 
 Note that in order for $\Ec_M$ or $\Dc_M$ to be non empty, the smallest element in $M$ must be odd and the greatest element in $M$ must be even.   When $M=[n]$, we simply write $\Ec_n$ and $\Dc_n$ for $\Ec_{[n]}$ and $\Dc_{[n]}$, respectively. For example, 
 \begin{align*}
 \Ec_6=\{(1,2,3,4,5,6),(1,2,4,3,5,6),(1,2,5,6,3,4)\},\\
 \Dc_6=\{(1,3,5,6,4,2),(1,4,3,5,6,2),(1,5,6,3,4,2)\}.
 \end{align*}
 \end{definition}
 
 The next result proves~\cite[Conjecture~6.4]{Wachs} bijectively\footnote{We learnt that Qiongqiong Pan and Jiang Zeng~\cite{Pan} have also proved~\cite[Conjecture~6.4]{Wachs} independently using continued fractions. } in view of Dumont's result~\eqref{eq:dumont}. 
 
 \begin{theorem}\label{thm:dum}
 There exists two bijections, $\Phi$ and $\Psi$, between $\Ec_{2n}$ and $\D_{2n-1}$. 
 \end{theorem}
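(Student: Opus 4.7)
Since the abstract advertises two very different constructions --- one via D-cycles (proving a multiset generalization of Lazar--Wachs Conjecture~6.5) and one via Laguerre histories --- I will attack $\Phi$ and $\Psi$ separately.

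For $\Phi$ the natural route is to factor $\Phi=\beta\circ\alpha$ through $\Dc_{2n}$. First I would build a bijection $\alpha\colon \Ec_M\to\Dc_M$ for an arbitrary multiset $M$. The key observation is that a cycle in $\Ec_M$ decomposes into maximal weakly increasing arcs punctuated by even-odd drops, while a cycle in $\Dc_M$ decomposes into alternating odd-increasing runs and even-decreasing runs. So $\alpha$ should parse the input into such blocks and splice them back together in the order dictated by the D-cycle constraint, with $\alpha^{-1}$ reading a D-cycle, detecting the odd-to-even junctions, and reassembling the blocks to produce even-odd drops. Once $\alpha$ is in place, I would construct $\beta\colon\Dc_{2n}\to\D_{2n-1}$ by starting the D-cycle at $1$, writing it linearly, and deleting the maximum $2n$: the D-cycle inequalities then guarantee that the resulting word has a descent exactly where an even value sits, which is precisely the Dumont condition.

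For $\Psi$ the plan is to pass through Laguerre histories, which encode permutations via labeled Motzkin paths in a Fran\c{c}on--Viennot style. I would first characterize the subset $\mathcal{H}$ of Laguerre histories whose image is $\D_{2n-1}$ under the standard correspondence --- the parity descent condition should translate into a restriction on which step types and labels are allowed at each position --- and then independently encode cycles in $\Ec_{2n}$ as histories in $\mathcal{H}$ by a growth procedure that inserts the elements of $[2n]$ one at a time into a growing partial cycle and records each insertion choice as a labeled Motzkin step.

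The main obstacle in the first route is showing that $\alpha$ is well-defined as a map into \emph{single} cycles (the splicing of blocks must not fragment the cycle) and that the inverse faithfully recreates the even-odd drop pattern. In the second route the main difficulty will be identifying $\mathcal{H}$ explicitly: matching the parity constraint defining $\D_{2n-1}$ with the cyclic even-odd drop constraint defining $\Ec_{2n}$ at the level of labels on the underlying Motzkin path is where the Genocchi number must surface, and arranging that the two encodings land on exactly the same subclass of histories is the crux of the argument.
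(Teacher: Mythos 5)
Your two routes match the paper's architecture (one bijection through D-cycles, one through Laguerre histories), but both are left open exactly at their decisive steps, and the one concrete step you do give is wrong. For the D-cycle route, the paper's map $\psi\colon\Ec_M\to\Dc_M$ is not a re-splicing of blocks ``in the order dictated by the D-cycle constraint'' --- that is not a well-defined map, since many arrangements of the blocks satisfy the D-cycle inequalities; it is a specific local move: each \emph{even cyclic double ascent} bundle is relocated immediately before the closest (clockwise) bundle of smaller value, and the inverse performs the symmetric move on even cyclic double descents in the anti-clockwise direction. Supplying such an explicit invertible rule is the whole content of Theorem~\ref{thm:mul}, and your outline does not contain it. Moreover your linearization $\beta\colon\Dc_{2n}\to\D_{2n-1}$ (start the cycle at $1$, write it linearly, delete $2n$) fails: every element of $\D_{2n-1}$ ends in $2n-1$ (the odd maximum cannot sit before a smaller letter), while your words always start with $1$; concretely, $(1,5,6,3,4,2)\in\Dc_6$ gives $15342$, which has the odd descent $5>3$ and is not in $\D_5$. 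The correct cut is the paper's $\vartheta$: delete $2n$ and read the cycle starting immediately \emph{after} it, so that $(1,5,6,3,4,2)\mapsto 34215\in\D_5$.

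For the Laguerre-history route, your hope that the $\Ec_{2n}$-encoding and the $\D_{2n-1}$-encoding ``land on exactly the same subclass of histories'' is not realizable with the Fran\c{c}on--Viennot correspondence: cutting an even-odd-drop cycle at $1$ and subtracting $1$ yields permutations with only odd-even descents, whose images form the class $M_{2n-2}$ (odd-indexed steps $D$ or $L_0$, even-indexed steps $U$ or $L_0$), whereas $\D_{2n-1}$ maps onto the different class $M_{2n-2}^*$ (odd-indexed steps $U$ or $L_0$, even-indexed steps $D$ or $L_1$ with labels $\mu_i\geq 1$). The paper therefore needs a further bijection $\rho\colon M_{2n-2}\to M_{2n-2}^*$, given by an explicit surgery on each consecutive pair of steps $(w_{2i-1},w_{2i})$ together with a label shift. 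This $\rho$ (or, equivalently, the bespoke growth procedure encoding $\Ec_{2n}$ directly into the $\D$-side class) is precisely the crux you identify but do not provide, so as written neither half of your proposal reaches a proof.
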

 
 The construction of $\Phi$ is based on the classical  {\em Fran\c{c}on--Viennot bijection}~\cite{Fran} that encodes permutations as Laguerre histories, while $\Psi$ is the composition of the  bijection $\psi$ below with a simple transformation. 
 
 \begin{theorem}\label{thm:mul}
 For a fixed multiset $M$, there exists a bijection $\psi$ between $\Ec_M$ and $\Dc_M$. 
 \end{theorem}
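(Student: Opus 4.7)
The plan is to construct $\psi$ by preserving the odd subsequences of the cycle and redistributing the evens via a cyclic cascade. Given $\sigma\in\Ec_M$, I will decompose it cyclically into its maximal weakly ascending runs $R_1,\ldots,R_k$ (cut at the even-odd drops). Each run starts with an odd and ends with an even, so $R_i$ is uniquely the ascending merge of its odd subsequence $O_i$ and its even subsequence $E_i$ (both weakly ascending). Dually, every $\tau\in\Dc_M$ decomposes cyclically as $O_1'E_1'\cdots O_\ell'E_\ell'$ with $O_i'$ a weakly ascending odd block and $E_i'$ a weakly descending even block. In this common encoding the $\Ec$ conditions become $\min(O_i)<\min(E_i)$, $\max(O_i)<\max(E_i)$, and the drop inequality $\max(E_i)>\min(O_{i+1})$, while the $\Dc$ conditions become $\max(O_i')\le\max(E_i')$ and $\min(E_i')\ge\min(O_{i+1}')$ (all indices cyclic).

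I will define $\psi(\sigma)$ by setting $O_i':=O_i$ and forward-cascading each $e\in E_i$ to the block $E_j'$ where $j$ is the first cyclic index, starting at $i$ and moving forward, for which $e\ge\min(O_{j+1})$; such $j$ exists because the smallest element $o_{\min}$ of $M$ is odd, satisfies $o_{\min}<e$, and equals $\min(O_{j_0})$ for some $j_0$. After each $E_j'$ is sorted in weakly decreasing order, the concatenation with the $O_i'$ produces the $\Dc$-type cycle $\psi(\sigma)$. Symmetrically, $\psi^{-1}(\tau)$ sets $O_i:=O_i'$ and sends each $e\in E_j'$ back to the source block $E_i$, where $i$ is the first cyclic index reached by walking backward from $j$ with $\min(O_i)<e$; the preimage cycle is then formed by ascending-merging $O_i$ and $E_i$ in each run $R_i$.

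The verifications proceed by checking the $\Ec$ and $\Dc$ constraints block by block. For $\psi(\sigma)\in\Dc_M$: the $\Ec$ drop condition $\max(E_i)>\min(O_{i+1})$ keeps $\max(E_i)$ in place, so $E_i'\ne\emptyset$ and $\max(E_i')\ge\max(E_i)>\max(O_i)=\max(O_i')$, while the cascade's stopping rule yields $\min(E_i')\ge\min(O_{i+1}')$. For $\psi^{-1}(\tau)\in\Ec_M$: the inequality $\max(O_i')\le\max(E_i')$, strict by parity, gives $\min(O_i)\le\max(O_i)<\max(E_i')$, so $\max(E_i')$ has source $i$ and $E_i\ne\emptyset$; the remaining $\Ec$ inequalities follow from $\max(E_i)\ge\max(E_i')>\max(O_i)$ and $\max(E_i)\ge\max(E_i')\ge\min(E_i')>\min(O_{i+1})$, with the last strictness again by parity.

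The main obstacle will be showing that $\psi$ and $\psi^{-1}$ are mutually inverse. For $\psi^{-1}\circ\psi=\mathrm{id}$, take $e\in E_i$ in the original $\Ec$ cycle: the forward cascade halts at the least cyclic $j\ge i$ with $\min(O_{j+1})\le e$, forcing $\min(O_{i'})>e$ for every $i'\in(i,j]$, while the $\Ec$ condition $\min(O_i)<\min(E_i)\le e$ holds at $i$; thus the source rule of $\psi^{-1}$, walking backward from $j$, returns exactly $i$. The composition $\psi\circ\psi^{-1}=\mathrm{id}$ is verified analogously, using $\min(E_j')\ge\min(O_{j+1}')$ to guarantee that the forward cascade, when started at the source $i$ assigned by $\psi^{-1}$, lands back at the original block $j$.
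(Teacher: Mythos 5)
Your construction is correct, and in fact the map you define is the same bijection as the paper's $\psi$, just written in a different coordinate system. The paper phrases $\psi$ as a local rearrangement: write the cycle in compact (bundle) form and move every even cyclic double-ascent bundle clockwise to just before the nearest smaller bundle, with the inverse moving even cyclic double-descent bundles anticlockwise; it then declares the checks that the image lies in $\Dc_M$ and that the two moves are mutually inverse to be routine. Your version encodes an $\Ec_M$-cycle by its maximal weakly ascending runs split into odd and even parts, encodes a $\Dc_M$-cycle by its alternating ascending-odd/descending-even blocks, and realizes the same rearrangement as a reassignment of even elements governed by the stopping rule ``first index $j$ with $e>\min(O_{j+1})$'' followed by the canonical descending sort inside each even block; scanning clockwise from a non-peak even $e$, the first strictly smaller value one meets is precisely the first run-minimum below $e$, and the descending sort reproduces the ``insert just before the nearest smaller value'' placement, so the two descriptions agree (likewise for the inverses). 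What your route buys is that the paper's ``routine to check'' statements become explicit inequalities: nonemptiness of every even block (via $\max(E_i)$ staying put, resp.\ $\max(E_i')$ having source $i$), the two $\Dc$ junction inequalities, and the two composition identities via the interval where the cascade fails. Two details are left tacit but are immediate: for $\psi^{-1}(\tau)\in\Ec_M$ you never verify $\min(O_i)<\min(E_i)$, which follows at once since any $e$ assigned source $i$ satisfies $\min(O_i)<e$ by the source rule; and termination of the backward walk needs the same global-minimum remark you made for the forward cascade.
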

 
 For $M$ being a subset of $\Z_{>0}$, the above theorem proves~\cite[Conjecture~6.5]{Wachs} bijectively. Thus, Theroem~\ref{thm:mul} is a multiset generalization of~\cite[Conjecture~6.5]{Wachs}.  As an immediate consequence of Theorems~\ref{thm:dum} and~\ref{thm:mul} and Dumont's result~\eqref{eq:dumont},  we have
 \begin{cor}
For $n\geq1$, $|\Ec_{2n}|=|\Dc_{2n}|=g_n$.
 \end{cor}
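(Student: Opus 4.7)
The plan is to read the corollary as a direct composition of the three ingredients already in hand: Dumont's enumeration \eqref{eq:dumont}, Theorem~\ref{thm:dum}, and Theorem~\ref{thm:mul}. No further combinatorial machinery is needed; the only task is to chain the bijections correctly with $M=[2n]$ and make sure the hypotheses of Theorem~\ref{thm:mul} are satisfied in this specialisation.

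First I would verify that $M=[2n]$ is an admissible multiset for Theorem~\ref{thm:mul}: its minimum element $1$ is odd and its maximum element $2n$ is even, so $\Ec_{[2n]}=\Ec_{2n}$ and $\Dc_{[2n]}=\Dc_{2n}$ are not forced to be empty by the parity obstruction noted just after the definition. Then I would invoke Theorem~\ref{thm:mul} with this choice of $M$ to obtain the bijection $\psi\colon \Ec_{2n}\to\Dc_{2n}$, which gives the second equality $|\Ec_{2n}|=|\Dc_{2n}|$.

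Next I would apply Theorem~\ref{thm:dum}, which supplies a bijection (either $\Phi$ or $\Psi$) between $\Ec_{2n}$ and $\D_{2n-1}$; this yields $|\Ec_{2n}|=|\D_{2n-1}|$. Finally, combining with Dumont's identity \eqref{eq:dumont}, namely $|\D_{2n-1}|=g_n$, I chain the three equalities
\[
|\Dc_{2n}|\;=\;|\Ec_{2n}|\;=\;|\D_{2n-1}|\;=\;g_n,
\]
which is exactly the claim. There is essentially no obstacle here, since the corollary is a formal consequence: all the substance sits inside Theorems~\ref{thm:dum} and~\ref{thm:mul}. The only thing one needs to be careful about is not to inadvertently apply Theorem~\ref{thm:mul} with a multiset for which $\Ec_M$ or $\Dc_M$ is forced to be empty, but in the case $M=[2n]$ this is immediate.
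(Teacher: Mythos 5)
Your proposal is correct and matches the paper exactly: the corollary is stated there as an immediate consequence of Theorems~\ref{thm:dum} and~\ref{thm:mul} (applied with $M=[2n]$) together with Dumont's identity~\eqref{eq:dumont}, which is precisely the chain of equalities you give. The extra check that $1$ is odd and $2n$ is even is harmless but not needed, since Theorem~\ref{thm:dum} already guarantees $|\Ec_{2n}|=|\D_{2n-1}|=g_n>0$.
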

 
 \begin{remark}
Let  $\Ec_{2n}^{(k)}:=\Ec_M$ with $M=\{1^k,2^k,\ldots,(2n)^k\}$. 
 Since $|\Ec_{2n}|=g_n$, $|\Ec_{2n}^{(k)}|$ can be considered as a new generalization of the Genocchi numbers. For another generalization of the Genocchi numbers using the model of trees, the reader is referred to~\cite{Han}. Can the generating function for $|\Ec_{2n}^{(k)}|$ be calculated? Is there any divisibility property for $|\Ec_{2n}^{(k)}|$ similar to $g_n$ (see~\cite{Han})? 
 \end{remark}
 
 The rest of this paper is organized as follows. After recalling the Fran\c{c}on--Viennot bijection, we construct $\Phi$ in Section~\ref{sec:2}. In Section~\ref{sec:3}, we first present the bijection $\psi$ for Theorem~\ref{thm:mul} and then use it to construct $\Psi$. Finally, in Section~\ref{sec:4}, we provide an Inclusion-Exclusion approach to Dumont's result~\eqref{eq:dumont} for the sake of completeness.

 \section{The construction of $\Phi$}
 \label{sec:2}
 In order to construct $\Phi$, we need to recall the classical Fran\c{c}on--Viennot bijection~\cite{Fran} first.
 A {\em Motzkin path} of length $n$ is a lattice path in the first  quadrant starting from $(0,0)$, ending at $(n,0)$, and using three possible steps: 
 $$
 U=(1,1) \text{ (up step), } L=(1,0) \text{ (level step) } \text{ and } D=(1,-1) \text{ (down step).}
 $$ 
 A {\em$2$-Motzkin path} is a Motzkin path in which each level step is further distinguished into two different types of level steps $L_0$ or $L_1$. 
 A $2$-Motzkin paths will be represented as a word over the alphabet $\{U,D,L_0,L_1\}$. A {\em Laguerre history} of length $n$ is a pair $(w,\mu)$, where $w=w_1\cdots w_n$ is a $2$-Motzkin path and $\mu=(\mu_1,\cdots,\mu_n)$ is a vector satisfying $0\leq\mu_i\leq h_i(w)$, where 
 $$h_i(w):=\#\{j\mid j<i, w_j=U\}-\# \{j\mid j<i, w_j=D\}$$
  is the {\em height} of the starting point of the $i$-th step of $w$. Denote by $\L_n$ the set of all Laguerre histories of length $n$. 
 
 For a permutation $\sigma\in\S_n$,  a letter $\sigma(i)$ is called a {\em valley} (resp.~{\em peak, double descent, double ascent}) of $\sigma$ if $\sigma(i-1)>\sigma(i)<\sigma(i+1)$ (resp.~$\sigma(i-1)<\sigma(i)>\sigma(i+1)$, $\sigma(i-1)>\sigma(i)>\sigma(i+1)$, $\sigma(i-1)<\sigma(i)<\sigma(i+1)$), where we use the assumption $\sigma(0)=\sigma(n+1)=0$. For a fixed $1\leq k\leq n-1$, define 
 $$
\pp_k\sigma=\#\{i: i-1>j\text{ and } \sigma(i-1)<\sigma(j)=k<\sigma(i)\}.
$$
 The Fran\c{c}on--Viennot bijection  $\phi_{FV}:\S_n\rightarrow\L_{n-1}$ that we need is the following modified version (see for example~\cite{Shin-zeng2012}) defined as
 $\phi_{FV}(\sigma)=(w,\mu)\in\L_{n-1}$, where for each $i\in[n-1]$: 

  $$
  w_i=\left\{
  \begin{array}{ll}
  U& \mbox{if $i$ is a valley of $\sigma$},  \\
 D&\mbox{if $i$ is a peak of $\sigma$},  \\
   L_0&\mbox{if $i$ is a double ascent of $\sigma$},\\
  L_1 &\mbox{if $i$ is a double descent of $\sigma$},
  \end{array}
  \right.
  $$
  and
$\mu_i=\pp_i(\sigma)$. For example, if $\sigma=528713649\in\S_9$, then $\phi_{FV}(\sigma)=(w,\mu)$, where $w=UUL_0UDDL_1D$ and $\mu=(0,1,0,0,3,1,1,1)$. The reverse algorithm $\phi_{FV}^{-1}$ building a permutation $\sigma$ from a Laguerre history $(w,\mu)\in\L_{n-1}$ can be described  iteratively as:
\begin{itemize}
\item Initialization: $\sigma=\circ$;
\item At the $i$-th ($1\leq i\leq n-1$) step of the algorithm, replace the $(h_i(w)+1)$-th $\circ$ (from right to left) of $\sigma$ by
$$
\begin{cases}
\,\circ i\circ& \text{if $w_i=U$},\\
\, i\circ& \text{if $w_i=L_0$},\\
 \, \circ i& \text{if $w_i=L_1$},\\
  \, i& \text{if $w_i=D$};
\end{cases}
$$
\item The final permutation is obtained by replacing  the last remaining $\circ$ by $n$. 
\end{itemize}
For example, if $(w,\mu)=(UL_1UDL_0D,(0,1,1,2,0,0))\in\L_{6}$, then $\sigma$ is built as follows:
$$
\sigma=\circ\rightarrow \circ1\circ\rightarrow \circ21\circ\rightarrow \circ3\circ21\circ\rightarrow 43\circ21\circ\rightarrow 43\circ215\circ\rightarrow 43\circ2156\rightarrow 4372156.
$$

 Now we begin to construct $\Phi$ step by step.  Let $\S_{2n-1}^{oe}$ be the set of permutations $\sigma\in\S_{2n-1}$ with only {\em odd-even descents} (i.e., whenever $\sigma(i)>\sigma(i+1)$, then $\sigma(i)$ is odd and $\sigma(i+1)$ is even) and whose last entry is odd. For instance, $\S_{5}^{oe}=\{12345,13245,14523\}$.
 
 \begin{lemma}\label{lem:5}
 There exists a bijection $\eta: \Ec_{2n}\rightarrow\S_{2n-1}^{oe}$.
 \end{lemma}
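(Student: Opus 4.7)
The plan is to define $\eta$ by the elementary ``strip the leading $1$ and shift down by $1$'' operation. Since every cycle $C\in\Ec_{2n}$ contains the element $1$ exactly once, I first write $C$ in its unique rotation $(1,a_2,a_3,\ldots,a_{2n})$ with $a_1=1$, and then set
\[
\eta(C):=(a_2-1)(a_3-1)\cdots(a_{2n}-1).
\]
Since $\{a_2,\ldots,a_{2n}\}=\{2,3,\ldots,2n\}$, this is a permutation in $\S_{2n-1}$.

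To verify that $\sigma:=\eta(C)\in\S_{2n-1}^{oe}$, I would check two things. For the descent condition, each descent $\sigma(i)>\sigma(i+1)$ corresponds exactly to an interior drop $a_{i+1}>a_{i+2}$ of $C$; the even-odd hypothesis forces $a_{i+1}$ even and $a_{i+2}$ odd, and subtracting $1$ flips parities, so $\sigma(i)$ is odd and $\sigma(i+1)$ is even, an odd-even descent. For the last-entry condition, note that since every entry other than $1$ is at least $2$, the wrap-around step $a_{2n}\to a_1=1$ is always a drop of $C$; the even-odd hypothesis therefore forces $a_{2n}$ to be even, whence $\sigma(2n-1)=a_{2n}-1$ is odd.

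For bijectivity I would exhibit the inverse explicitly: given $\sigma\in\S_{2n-1}^{oe}$, set
\[
\eta^{-1}(\sigma):=(1,\sigma(1)+1,\sigma(2)+1,\ldots,\sigma(2n-1)+1).
\]
The internal drops of this cycle are in bijection with the odd-even descents of $\sigma$ and are hence even-odd; the wrap-around drop goes from $\sigma(2n-1)+1$ (even, because $\sigma(2n-1)$ is odd by the last-entry hypothesis) down to $1$ (odd), so it too is even-odd. Thus $\eta^{-1}(\sigma)\in\Ec_{2n}$, and $\eta$ and $\eta^{-1}$ are manifestly mutually inverse.

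The single substantive point is the recognition that the ``last entry odd'' condition defining $\S_{2n-1}^{oe}$ precisely encodes the evenness of $a_{2n}$ forced by the unavoidable wrap-around drop to $1$; once this matching is observed, the remainder of the argument is a routine parity-flip check under the translation $x\mapsto x-1$, and I do not anticipate any genuine obstacle.
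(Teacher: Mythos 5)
Your proposal is correct and follows exactly the paper's construction: rotate the cycle to start at $1$, delete the $1$, and subtract $1$ from each remaining entry, with the evident inverse. The paper states the verification (odd-even descents and odd last entry, via the forced wrap-around drop to $1$) as ``clear,'' while you spell it out; the content is the same.
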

\begin{proof}
For a cycle $\alpha=(a_1,a_2,\ldots,a_{2n})\in\Ec_{2n}$ with $a_1=1$, define $\eta(\alpha)$ to be the permutation $a_2-1,a_3-1,\ldots,a_{2n}-1$ (in one line notation), which is clearly in $\S_{2n-1}^{oe}$. It is easy to see that $\eta$ sets up an one-to-one correspondence between $\Ec_{2n}$ and $\S_{2n-1}^{oe}$.
\end{proof}

 Let us consider the subset $M_{2n}$ of Laguerre histories $(w,\mu)\in\L_{2n}$ with the restriction that
 $$
 w_i=\begin{cases}
 D\text{ or } L_0, \quad\text{if $i$ is odd},\\
 U\text{ or } L_0, \quad\text{if $i$ is even}.
 \end{cases}
 $$
 For instance, 
 $$M_4=\{(L_0UDL_0,(0,0,0,0)),(L_0UDL_0,(0,0,1,0)),(L_0L_0L_0L_0,(0,0,0,0))\}.$$
 
 \begin{lemma}\label{lem:6}
 The Fran\c{c}on--Viennot bijection  $\phi_{FV}$ restricts to a bijection between $\S_{2n-1}^{oe}$ and $M_{2n-2}$. 
 \end{lemma}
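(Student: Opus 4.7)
The plan is to show value-by-value that the parity constraints defining $M_{2n-2}$ correspond, under the Fran\c{c}on--Viennot bijection, to the local "odd-even descent" structure required of $\S_{2n-1}^{oe}$. Since $\phi_{FV}\colon\S_{2n-1}\to\L_{2n-2}$ is already a bijection, it suffices to verify the set equality $\phi_{FV}(\S_{2n-1}^{oe})=M_{2n-2}$, which I would reduce to letter-by-letter conditions.

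First I would give a local reformulation of $\S_{2n-1}^{oe}$. Setting $\sigma(0)=\sigma(2n)=0$, the defining condition packages into the single statement that every descent $\sigma(i)>\sigma(i+1)$ on $0\leq i\leq 2n-1$ has odd top and even bottom; note that the boundary descent $\sigma(2n-1)>0$ is precisely what encodes the last-entry-odd clause. Writing $j(k)$ for the position of the value $k$ in $\sigma$, I would check that this is equivalent to the conjunction of two purely local conditions: (a) $\sigma(j(k)-1)<k$ for every odd $k\in[2n-1]$, and (b) $\sigma(j(k)+1)>k$ for every even $k\in[2n-2]$. The forward direction is immediate from the two ways a descent can occur at a fixed letter, and the reverse is obtained by applying (a) to $\sigma(i+1)$ and (b) to $\sigma(i)$ at any descent.

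Next I would translate (a) and (b) through $\phi_{FV}$. Since $w_k$ records whether $k$ is a valley, peak, double ascent, or double descent, condition (a) at an odd $k\in[2n-2]$ reads $w_k\in\{D,L_0\}$ (peak or double ascent), while (b) at an even $k\in[2n-2]$ reads $w_k\in\{U,L_0\}$ (valley or double ascent). These are exactly the constraints defining $M_{2n-2}$; the largest letter $2n-1$ carries no $w$-step, and its incoming-ascent condition is automatic because $2n-1$ is maximal.

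The main delicate point I expect is the boundary at position $2n-1$: since $M_{2n-2}$ only constrains $w_i$ for $i\in[2n-2]$, one has to check that the last-entry-odd clause is genuinely absorbed by the $w$-constraints. Concretely, if $\sigma(2n-1)=k$ were even then $k\in[2n-2]$ and the outgoing step would be $k\to 0$, a descent, forcing $k$ to be a peak or double descent and hence $w_k\in\{D,L_1\}$, contradicting $w_k\in\{U,L_0\}$. Once this is in place, combining the equivalences $\sigma\in\S_{2n-1}^{oe}\Leftrightarrow\text{(a)+(b)}\Leftrightarrow\phi_{FV}(\sigma)\in M_{2n-2}$ yields the desired restriction of $\phi_{FV}$.
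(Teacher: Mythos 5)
Your proof is correct and follows essentially the same route as the paper: the paper's entire proof is the observation that $\sigma\in\S_{2n-1}^{oe}$ iff each odd letter $i\in[2n-2]$ is a peak or double ascent and each even letter is a valley or double ascent, which is exactly your conditions (a) and (b) translated through $\phi_{FV}$. You merely supply the details the paper leaves implicit, in particular the boundary convention $\sigma(0)=\sigma(2n)=0$ absorbing the last-entry-odd clause and the vacuous condition at the maximal letter $2n-1$.
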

 
 \begin{proof}
 This follows from the observation that $\sigma\in\S_{2n-1}$ is a permutation in $\S_{2n-1}^{oe}$ if and only if for each $i\in[2n-2]$, the letter $i$ of $\sigma$ is a double ascent or a peak whenever $i$ is odd and is a double ascent or a valley whenever $i$ is even. 
 \end{proof}
 
 Let us consider another subset $M_{2n}^*$ of Laguerre histories $(w,\mu)\in\L_{2n}$ with the restriction that
 $$
 w_i=\begin{cases}
 U\text{ or } L_0, \quad\text{if $i$ is odd},\\
 D\text{ or } L_1, \quad\text{if $i$ is even},
 \end{cases}
 $$
 and $1\leq\mu_i\leq h_i(w)$ when $i$ is even. 
 For instance, 
 $$M_4^*=\{(UDUD,(0,1,0,1)),(UL_1L_0D,(0,1,0,1)),(UL_1L_0D,(0,1,1,1))\}.$$
 
  \begin{lemma}\label{lem:7}
 The Fran\c{c}on--Viennot bijection  $\phi_{FV}$ restricts to a bijection between $\D_{2n-1}$ and $M_{2n-2}^*$. 
 \end{lemma}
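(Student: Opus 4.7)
The plan is to mirror the proof of Lemma~\ref{lem:6}: first translate membership in $\D_{2n-1}$ into a condition on the Fran\c{c}on--Viennot types of each letter (valley, peak, double ascent, double descent), then read off the resulting shape through $\phi_{FV}$, and finally handle the extra inequality $\mu_i \ge 1$ for even $i$ separately.

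First I would give a letter-by-letter characterization of $\D_{2n-1}$. The maximum letter $2n-1$ must occupy the last position: being odd, it cannot be followed by a smaller letter (which would be a descent from an odd letter) and has no room for a strictly larger successor. For $k \in [2n-2]$, the Dumont condition forces $k$ odd to be followed by a larger letter, so $k$ is a valley or a double ascent; and $k$ even to be followed by a smaller letter, so $k$ is a peak or a double descent. Under $\phi_{FV}$ this translates into $w_k \in \{U, L_0\}$ for odd $k$ and $w_k \in \{D, L_1\}$ for even $k$, matching the $w$-component requirement of $M_{2n-2}^*$.

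Next I would interpret the inequality $\mu_i \ge 1$ for even $i$. Recall $\mu_i = \pp_i(\sigma)$ counts ascents $\sigma(j-1) < i < \sigma(j)$ with $j-1 > \sigma^{-1}(i)$, i.e., ascents strictly to the right of $i$'s position that cross level $i$. For even $i \in [2n-2]$ in $\sigma \in \D_{2n-1}$, the successor of $i$ has value strictly less than $i$ (a descent), while the last entry of $\sigma$ is $2n-1 > i$; since a trajectory going from a value below $i$ to a value above $i$ must contain at least one ascent crossing level $i$, we obtain $\mu_i \ge 1$. Conversely, for $(w,\mu) \in M_{2n-2}^*$ the inequality $\mu_\ell \ge 1$ forbids any even $\ell$ from being the last letter of $\sigma := \phi_{FV}^{-1}(w,\mu)$ (no ascents could appear to its right), while the $w$-constraint forbids any odd $k < 2n-1$ from being last (a valley or double ascent cannot sit at the end, whose conventional successor is $0$). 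Hence $2n-1$ sits in the last position, and the Dumont descent rule at each $i \in [2n-2]$ is just the letter-type translation of the $w$-shape: $\sigma(i) > \sigma(i+1)$ iff $\sigma(i)$ is a peak or double descent iff $w_{\sigma(i)} \in \{D,L_1\}$ iff $\sigma(i)$ is even.

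The main obstacle I foresee is the interpretation of $\mu_i \ge 1$ for even $i$: once one recognizes $\mu_i$ as counting level-$i$ ascent crossings strictly after the position of $i$, the equivalence with ``$2n-1$ is the last letter of $\sigma$'' is a one-line crossing argument, and everything else is a routine unwinding of the Fran\c{c}on--Viennot encoding.
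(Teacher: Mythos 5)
Your proposal is correct and takes essentially the same route as the paper: characterize $\D_{2n-1}$ by ``$\sigma(2n-1)=2n-1$ together with the letter-type conditions on each $i\in[2n-2]$,'' translate the letter types through $\phi_{FV}$, and identify the extra condition $\mu_i\geq 1$ for even $i$ with the last entry being $2n-1$. The only cosmetic difference is in the converse, where the paper appeals to the iterative description of $\phi_{FV}^{-1}$ (the trailing $\circ$ survives until the last step) while you argue directly from the statistic $\pp_i$ that no letter of $[2n-2]$ can be last; both are valid, and your forward-direction crossing argument just makes explicit what the paper asserts parenthetically.
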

 \begin{proof}
 Observe that $\sigma\in\S_{2n-1}$ is a permutation in $\D_{2n-1}$ if and only if 
(i) $\sigma(2n-1)=2n-1$ and (ii) for each $i\in[2n-2]$, the letter $i$ of $\sigma$ is a double ascent or a valley whenever $i$ is odd and is a double descent or a peak whenever $i$ is even. Thus, if $\sigma\in\D_{2n-1}$, then $\phi_{FV}(\sigma)\in M_{2n-2}^*$ (as $\sigma(2n-1)=2n-1$ forces $\mu_i\geq1$ when $i$ is even). Conversely, if $(w,\mu)\in M_{2n-2}^*$, then as $\mu_i\geq1$ when $i$ is even, it follows from the  iterative construction of $\sigma=\phi_{FV}^{-1}(w,\mu)$ that the $\circ$ at the end of $\sigma$ remains until the last step, i.e., $\sigma(2n-1)=2n-1$. Therefore, we have $\phi_{FV}^{-1}(w,\mu)\in\D_{2n-1}$ for any $(w,\mu)\in M_{2n-2}^*$. 
 \end{proof}

 \begin{lemma}\label{lem:8}
 There exists a bijection $\rho:M_{2n}\rightarrow M_{2n}^*$.
 \end{lemma}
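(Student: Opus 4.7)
The plan is to define $\rho$ by an explicit step-by-step substitution on the path $w$ together with a unit shift on the even entries of $\mu$, and then to verify by a short case analysis on the four possible pair-shapes $(w_{2k-1},w_{2k})$ that the result lies in $M_{2n}^*$. The inverse will be given by the reverse substitutions.

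Concretely, I would define $\rho(w,\mu)=(w',\mu')$ by the following rule: at an odd position $i$, set $w'_i=L_0$ when $w_i=D$ and $w'_i=U$ when $w_i=L_0$, and keep $\mu'_i=\mu_i$; at an even position $i$, set $w'_i=L_1$ when $w_i=U$ and $w'_i=D$ when $w_i=L_0$, and set $\mu'_i=\mu_i+1$. This sends the four $M_{2n}$-pairs $(D,U),(D,L_0),(L_0,U),(L_0,L_0)$ respectively to $(L_0,L_1),(L_0,D),(U,L_1),(U,D)$, and in each case the net height change over the pair is preserved (the four values are $0,-1,+1,0$ on both sides). Consequently $h_{2k+1}(w')=h_{2k+1}(w)$ for every $k$, so $w'$ ends at $0$ and is non-negative at every odd index.

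The heart of the verification is the uniform identity $h_{2k}(w')=h_{2k}(w)+1$, obtained by inspecting the same four cases: regardless of which pair shape occurs, the intermediate height rises by exactly one under the substitution. This single identity does all the remaining work. It forces $h_{2k}(w')\ge 1$, legalizing every $D$ that appears at an even position of $w'$. It also converts the original constraint $0\le\mu_{2k}\le h_{2k}(w)$ into exactly $1\le\mu'_{2k}\le h_{2k}(w')$ after the shift $\mu'_{2k}=\mu_{2k}+1$, which is precisely the additional constraint imposed in $M_{2n}^*$. At odd positions the constraint is unchanged because $h_i(w')=h_i(w)$ and $\mu'_i=\mu_i$.

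The inverse $\rho^{-1}$ uses the reverse step rules ($U\mapsto L_0$ and $L_0\mapsto D$ at odd positions, $L_1\mapsto U$ and $D\mapsto L_0$ at even positions) together with $\mu'_{2k}\mapsto\mu'_{2k}-1$; the analogous case check works, and the point going backwards is that the built-in condition $\mu'_{2k}\ge 1$ is precisely what guarantees the preimage path stays non-negative at its intermediate heights. I do not anticipate any real obstacle: the whole argument is a compact case check over four pair shapes, and the only genuine insight required is spotting that the shift $\mu'_{2k}=\mu_{2k}+1$ absorbs exactly the unit rise in intermediate height produced by the step substitution.
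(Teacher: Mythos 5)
Your map is exactly the paper's bijection $\rho$: your position-wise substitution ($D\mapsto L_0$, $L_0\mapsto U$ at odd steps; $U\mapsto L_1$, $L_0\mapsto D$ at even steps, with $\mu'_{2k}=\mu_{2k}+1$) reproduces precisely the paper's four pairwise cases $(D,U)\mapsto(L_0,L_1)$, $(D,L_0)\mapsto(L_0,D)$, $(L_0,U)\mapsto(U,L_1)$, $(L_0,L_0)\mapsto(U,D)$ with the same weight shift. Your verification via $h_{2k}(w')=h_{2k}(w)+1$ and preserved heights at odd indices is correct and in fact spells out the details the paper leaves as ``routine.''
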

  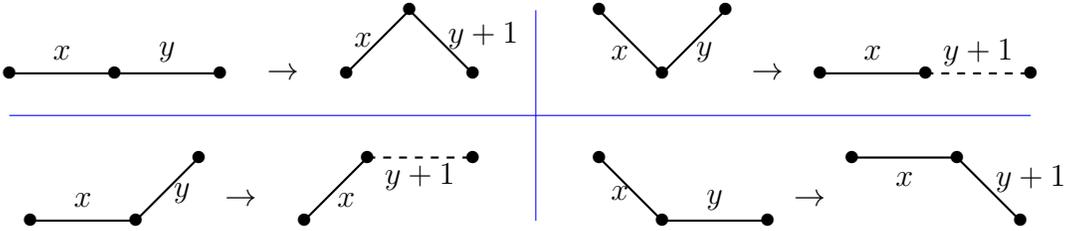
\begin{figure}[h]
\centering
\begin{tikzpicture}[scale=0.28]
\node at (0,0) {$\bullet$};
\node at (5,0) {$\bullet$};\node at (10,0) {$\bullet$};

\node at (16,0) {$\bullet$};\node at (19,3) {$\bullet$};\node at (22,0) {$\bullet$};

\node at (31,0) {$\bullet$};\node at (28,3) {$\bullet$};\node at (34,3) {$\bullet$};

\node at (38.5,0) {$\bullet$};\node at (43.5,0) {$\bullet$};\node at (48.5,0) {$\bullet$};

\node at (1,-7) {$\bullet$};\node at (6,-7) {$\bullet$};\node at (9,-4) {$\bullet$};
\node at (14,-7) {$\bullet$};\node at (17,-4) {$\bullet$};\node at (22,-4) {$\bullet$};

\node at (28,-4) {$\bullet$};\node at (31,-7) {$\bullet$};\node at (36,-7) {$\bullet$};
\node at (40,-4) {$\bullet$};\node at (45,-4) {$\bullet$};\node at (48,-7) {$\bullet$};
\node at (2.5,1) {$x$};\node at (7.5,1) {$y$};
\node at (29,1) {$x$};\node at (33,1) {$y$};
\node at (3.5,-6) {$x$};\node at (8.2,-5.7) {$y$};
\node at (29,-5.7) {$x$};\node at (33.5,-6) {$y$};
\node at (13,0) {$\rightarrow$};\node at (36,0) {$\rightarrow$};\node at (11,-6) {$\rightarrow$};
\node at (38,-6) {$\rightarrow$};
\node at (16.8,1.6) {$x$};\node at (22.5,1.8) {$y+1$};
\node at (41,1) {$x$};\node at (46,1) {$y+1$};
\node at (16,-6) {$x$};\node at (19.5,-4.9) {$y+1$};
\node at (42.5,-5) {$x$};\node at (48.5,-5) {$y+1$};
\draw[-,color=blue] (25,3) to (25,-7);\draw[-,color=blue] (0,-2) to (48.5,-2);
\draw[thick] (0,0) to (5,0);
\draw[thick] (5,0) to (10,0);
\draw[thick] (16,0) to (19,3);\draw[thick] (22,0) to (19,3);
\draw[thick] (31,0) to (28,3);\draw[thick] (31,0) to (34,3);
\draw[thick] (38.5,0) to (43.5,0);\draw[thick,dashed] (43.5,0) to (48.5,0);
\draw[thick] (1,-7) to (6,-7);\draw[thick] (6,-7) to (9,-4);
\draw[thick] (14,-7) to (17,-4);\draw[thick,dashed] (17,-4) to (22,-4);
\draw[thick] (28,-4) to (31,-7);\draw[thick] (31,-7) to (36,-7);
\draw[thick] (40,-4) to (45,-4);\draw[thick] (48,-7) to (45,-4);
\end{tikzpicture}
\caption{The construction of $\rho$: a dashed level step represents $L_1$. \label{fig:rho}}
\end{figure}
 \begin{proof}
 For a Laguerre history $(w,\mu)\in M_{2n}$, we construct $\rho(w,\mu)=(w',\mu')$ by transforming each consecutive two steps $(w_{2i-1},w_{2i})$ and their weights $(\mu_{2i-1},\mu_{2i})=(x,y)$ ($1\leq i\leq n$) according to the following four cases (see Fig.~\ref{fig:rho}):
 \begin{itemize}
 \item If $(w_{2i-1},w_{2i})=(L_0,L_0)$, then $(w_{2i-1}',w_{2i}')=(U,D)$ and $(\mu_{2i-1}',\mu_{2i}')=(x,y+1)$;
  \item If $(w_{2i-1},w_{2i})=(D,U)$, then $(w_{2i-1}',w_{2i}')=(L_0,L_1)$ and $(\mu_{2i-1}',\mu_{2i}')=(x,y+1)$;
 \item If $(w_{2i-1},w_{2i})=(L_0,U)$, then $(w_{2i-1}',w_{2i}')=(U,L_1)$ and $(\mu_{2i-1}',\mu_{2i}')=(x,y+1)$;
 \item If $(w_{2i-1},w_{2i})=(D,L_0)$, then $(w_{2i-1}',w_{2i}')=(L_0,D)$ and $(\mu_{2i-1}',\mu_{2i}')=(x,y+1)$.
 \end{itemize}
 It is routine to check that $\rho$ sets up an one-to-one correspondence between $M_{2n}$ and $M_{2n}^*$. 
 \end{proof}
 
 By Lemmas~\ref{lem:5},~\ref{lem:6},~\ref{lem:7} and~\ref{lem:8}, the composition $\Phi:=\phi_{FV}^{-1}\circ\rho\circ\phi_{FV}\circ\eta$ is a bijection between $\Ec_{2n}$ and $\D_{2n-1}$. 
 
\section{The construction of $\psi$ and $\Psi$}
 \label{sec:3}

\subsection{ The construction of $\psi$}
It is clear that every cycle $\alpha$ on $M$ can be written uniquely as $\alpha=(a_1^{l_1},a_2^{l_2},\ldots,a_k^{l_k})$, called the {\em compact form} of $\alpha$, where $a_i\neq a_{i+1}$ for $1\leq i\leq k$ (by convention $a_{k+1}=a_1$) and $l_i\geq 1$, that is, all the adjacency letters with the same values are pinched into a bundle. For example, the compact form of the cycle $(1,2,2,1,1,1,3,4,4,2,1)$ is $(1^2,2^2,1^3,3,4^2,2)$. A bundle $a_i^{l_i}$ ($1\leq i\leq k$) is called a {\em cyclic  double ascent} (resp.~ {\em cyclic  double descent}) of $\alpha$ if $a_{i-1}<a_i<a_{i+1}$ (resp.~$a_{i-1}>a_i>a_{i+1}$). The parity of a bundle $a_i^{l_i}$ is the parity of $a_i$. Now if $\alpha\in\Ec_M$, then define $\psi(\alpha)$ to be the cycle  obtained from $\alpha$ by moving each even cyclic double ascent bundle to the place immediately before the closest ({\bf in clockwise direction}) bundle with smaller value. For example (see Fig.~\ref{fig:psi}), if 
$$\alpha=(1,2^2,4^3,6,5^2,6,1^2,8,1^2,4,5,8,3^2,4),$$
then 
$$
\psi(\alpha)=(1,6,5^2,6,4^3,2^2,1^2,8,1^2,5,8,4,3^2,4). 
$$

 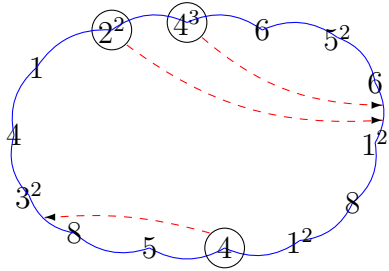
\begin{figure}[h]
\centering
\begin{tikzpicture}[scale=1]
\node at (0,0) {$1$};\node at (1,0.5) {$2^2$};
\node at (1,0.5){\circle{15}};\node at (2,0.6){\circle{15}};
\draw[bend right=20,color=red,dashed] (2.2,0.4) to (4.6,-0.5);\node at (4.6,-0.5){\vector(1,0){0.1}};
\draw[bend right=20,color=red,dashed] (1.2,0.3) to (4.6,-0.7);\node at (4.6,-0.7){\vector(1,0){0.1}};
\draw[bend left=30,color=blue] (0,0) to (1,0.5);\draw[bend left=30,color=blue] (1,0.5) to (2,0.6);
\node at (2,0.6) {$4^3$}; \draw[bend left=30,color=blue] (2,0.6) to (3,0.5);
\node at (3,0.5) {$6$};\draw[bend left=30,color=blue] (3,0.5) to (4,0.4);
\node at (4,0.4) {$5^2$};\draw[bend left=30,color=blue] (4,0.4) to (4.5,-0.2);
\node at (4.5,-0.2) {$6$};\draw[bend left=30,color=blue] (4.5,-0.2) to (4.5,-1);
\node at (4.5,-1) {$1^2$};\draw[bend left=30,color=blue] (4.5,-1) to (4.2,-1.8);
\node at (4.2,-1.8) {$8$};\draw[bend left=30,color=blue] (4.2,-1.8) to (3.5,-2.3);
\node at (3.5,-2.3) {$1^2$};\draw[bend left=30,color=blue]  (3.5,-2.3) to (2.5,-2.4);

\node at (2.5,-2.4) {$4$};\draw[bend left=30,color=blue]  (2.5,-2.4) to (1.5,-2.4);
\node at (2.5,-2.4){\circle{15}};
\draw[bend right=10,color=red,dashed] (2.3,-2.2) to (0.1,-2);\node at (0.1,-1.99){\vector(-1,0){0.1}};
\node at (1.5,-2.4) {$5$};\draw[bend left=30,color=blue]  (1.5,-2.4) to (0.5,-2.2);
\node at (0.5,-2.2) {$8$};\draw[bend left=30,color=blue]  (0.5,-2.2) to (-0.1,-1.7) ;
\node at (-0.1,-1.7) {$3^2$};\draw[bend left=30,color=blue]  (-0.1,-1.7)  to (-0.3,-0.9);
\node at (-0.3,-0.9) {$4$};\draw[bend left=30,color=blue]  (-0.3,-0.9) to (0,0);
\end{tikzpicture}
\caption{An example of $\psi$. \label{fig:psi}}
\end{figure}
 Two key observations about $\psi$ are: 
 \begin{itemize}
 \item the resulting cycle $\psi(\alpha)$ is independent of the order of the movings; 
 \item if the bundle $a_i^{l_i}$ is an even cyclic double ascent of $\alpha$, then $a_i^{l_i}$ becomes an even cyclic double descent bundle of $\psi(\alpha)$.
 \end{itemize}
 Moreover, it is routine to check that $\psi(\alpha)\in\Dc_M$. To see that $\psi$ is a bijection between $\Ec_M$ and $\Dc_M$, we define its inverse explicitly.  For a given cycle $\alpha\in\Dc_M$,  define $\psi^{-1}(\alpha)$ to be the cycle obtained from $\alpha$ by moving each even cyclic double descent bundle to the place immediately before the closest ({\bf in anti-clockwise direction}) bundle with smaller value. It is routine to check that $\psi$ and $\psi^{-1}$ are inverse of each other and thus $\psi$ is indeed  a bijection. 
 
 \subsection{The construction of $\Psi$}
 Suppose that $\alpha=(a_1,a_2,\ldots,a_{2n})\in\Dc_{2n}$ with $a_1=1$ and $a_k=2n$ for some $k$, then define $\vartheta(\alpha)$ to be the permutation 
 $$a_{k+1},a_{k+2},\ldots,a_{2n},a_1,a_2,\ldots,a_{k-1} \text{ (in one line notation)},
 $$
 which is clearly in $\D_{2n-1}$. For example, if $\alpha=(1,5,6,3,4,2)\in\Dc_{6}$, then $\vartheta(\alpha)=34215\in\D_5$. Thus, $\vartheta$ sets up an one-to-one correspondence between $\Dc_{2n}$ and $\D_{2n-1}$. Now define $\Psi$ to be the composition $\vartheta\circ\psi$, which is another  bijection between $\Ec_{2n}$ and $\D_{2n-1}$ in view of Theorem~\ref{thm:mul}. 
 
 \section{An Inclusion-Exclusion approach to Dumont's result~\eqref{eq:dumont}}
 \label{sec:4}
 For the sake of completeness, this section is devoted to an Inclusion-Exclusion approach to Dumont's result~\eqref{eq:dumont}. Our starting point is the following expression for Genocchi numbers deduced by Dumont~\cite[Proposition~1]{du}:
 \begin{equation}\label{exp:dum}
 g_{n+1}=\sum(-1)^{n-u_n}(u_1u_2\cdots u_n)^2,
 \end{equation}
 summed over all $(u_1,u_2,\ldots,u_n)$ such that $u_1=1$ and $u_i$ equals $u_{i-1}$ or $u_{i-1}+1$ for $2\leq i\leq n$. For example, $g_4=(1\cdot1\cdot1)^2-(1\cdot1\cdot2)^2-(1\cdot2\cdot2)^2+(1\cdot2\cdot3)^2=17$. 
 
 For a permutation $\sigma\in\S_n$, a letter $\sigma(i)$ is called a {\em descent top} of $\sigma$ if $i\in[n-1]$ and $\sigma(i)>\sigma(i+1)$. Denote by $\DT(\sigma)$ the set of all descent tops of $\sigma$. For example, $\DT(34215)=\{2,4\}$. For any $S\subseteq[2,n]:=\{2,3,\ldots,n\}$, let us introduce
 \begin{align*}
 \DT_=(S,n)&:=\{\sigma\in\S_n:\DT(\sigma)=S\},\\
 \DT_{\leq}(S,n)&:=\{\sigma\in\S_n:\DT(\sigma)\subseteq S\}.
 \end{align*}
 Let $f_=(S,n)= |\DT_=(S,n)|$ and $f_{\leq}(S,n)= |\DT_{\leq}(S,n)|$.
 Then, it follows from the principle of Inclusion-Exclusion  (see~\cite[Sec.~2.1]{Stan}) that
 \begin{equation}\label{eq:IE}
f_=(S,n)=\sum_{T\subseteq S}(-1)^{|S-T|}f_{\leq}(T,n).
 \end{equation}
 Suppose that $S=\{s_1,s_2,\ldots,s_k\}\subseteq[2,n]$ with $s_1>s_2>\cdots>s_k>1$. Let $d_i(S)=s_i-s_{i+1}$ for $i\in[k-1]$ and $d_k(S)=s_k-1$. We have the following product formula for $f_{\leq}(T,n)$.
 \begin{lemma}
 Let $T\subseteq S$ and let $u_i=u_i(T):=1+|\{t\in T: t\geq s_i\}|$. Then
 \begin{equation}\label{eq:leq}
 f_{\leq}(T,n)=\prod_{i=1}^k u_i^{d_i(S)}. 
 \end{equation} 
 \end{lemma}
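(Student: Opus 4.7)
The plan is to build the permutations counted by $f_{\leq}(T,n)$ by inserting the values $n, n-1, \dots, 1$ one at a time in decreasing order into an initially empty word, tracking how the descent top set evolves under each insertion. This should reduce the count to a product over insertion steps, which I will then group according to $S$.

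First I would establish a structural lemma describing how $\DT$ evolves: if $\pi$ is a partial permutation of $\{v+1,\dots,n\}$ and $v$ is inserted into slot $p$ (out of the $n-v+1$ available slots) to produce $\tau$, then
\[
\DT(\tau) = \begin{cases} \DT(\pi), & \text{if } p = 1, \\ \DT(\pi) \cup \{\pi(p-1)\}, & \text{if } p > 1. \end{cases}
\]
The key point is that $v$ is smaller than every existing entry, so any previous descent top still sees a smaller right neighbor and hence survives, while the only potential new descent top is the entry that now sits immediately to the left of $v$. Consequently the final permutation $\sigma$ satisfies $\DT(\sigma) \subseteq T$ if and only if at every step $v$ is inserted either at slot $1$, or into some slot $p > 1$ with $\pi(p-1) \in T$, giving exactly $1 + |T \cap \{v+1,\dots,n\}| = 1 + |\{t \in T : t > v\}|$ valid slots.

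Multiplying over the insertion steps yields
\[
f_{\leq}(T,n) = \prod_{v=1}^{n-1}\bigl(1 + |\{t \in T : t > v\}|\bigr),
\]
and since $T \subseteq [2, s_1]$ the factors for $v \geq s_1$ equal $1$. To recover the stated product, I would partition $[1, s_1 - 1]$ into intervals $I_i := [s_{i+1}, s_i - 1]$ for $i = 1, \dots, k$ (with the convention $s_{k+1} := 1$); each $I_i$ has cardinality $d_i(S)$, and for any $v \in I_i$ the set $\{t \in T : t > v\}$ coincides with $T \cap \{s_1, \dots, s_i\}$ (using $T \subseteq S$ and $s_{i+1} \leq v < s_i$), so the corresponding factor equals $u_i$. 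Collecting like factors yields $\prod_{i=1}^k u_i^{d_i(S)}$, as required.

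The main obstacle is the careful verification of the structural lemma, especially the claim that no previously created descent top is destroyed by a subsequent insertion; this calls for a short case analysis on $p$ (beginning, interior, or end of $\pi$). Once the lemma is in hand, the remaining steps are elementary bookkeeping.
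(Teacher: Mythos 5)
Your proposal is correct and follows essentially the same route as the paper: both build the permutations by inserting the letters in decreasing order and count, for each letter in the interval $[s_{i+1},s_i-1]$, exactly $u_i$ admissible slots (the leftmost position plus the positions immediately after the already-placed elements of $T$), then collect the factors interval by interval. Your explicit structural lemma on how $\DT$ evolves is just a more detailed justification of the same counting step.
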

 \begin{proof}
For any letter $\ell$, $s_{i+1}\leq \ell< s_{i}$, and any partial permutation $p$ of $\{n,n-1,\ldots,\ell+1\}$ whose descent top includes in $T$, there are exactly $u_i$ positions to insert the letter $\ell$ into $p$ to obtain a partial permutation of $\{n,n-1,\ldots,\ell\}$ with descent top includes in $T$. These $u_i$ positions are the leftmost space of $p$ plus the spaces immediately after each letter from $\{t\in T: t\geq s_i\}$. The desired product formula for $f_{\leq}(T)$ then follows. 
\end{proof}

Combining~\eqref{eq:IE} and~\eqref{eq:leq} we have the following formula for $f_=(S,n)$ that was obtained by Chang, Ma and Yeh~\cite[Theorem~1.1]{Ma} via different approach.

\begin{theorem}[Chang, Ma and Yeh~\cite{Ma}]\label{thm:Ma}
For any $S\subseteq[2,n]$ with $|S|=k$, we have 
\begin{equation}
f_=(S,n)=\sum (-1)^{k+1-u_{k}}\prod_{i=1}^k u_i^{d_i(S)},
\end{equation}
 summed over all $(u_0,u_1,u_2,\ldots,u_k)$ such that $u_0=1$ and $u_i$ equals $u_{i-1}$ or $u_{i-1}+1$ for $1\leq i\leq k$. 
\end{theorem}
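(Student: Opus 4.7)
The plan is to derive the formula directly by substituting the product expression~\eqref{eq:leq} into the Inclusion--Exclusion identity~\eqref{eq:IE} and then re-indexing the sum over subsets $T\subseteq S$ as a sum over admissible sequences $(u_0,u_1,\ldots,u_k)$.

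First, I would fix $S=\{s_1,s_2,\ldots,s_k\}\subseteq[2,n]$ with $s_1>s_2>\cdots>s_k>1$, and for each $T\subseteq S$ set $u_0(T):=1$ and $u_i(T):=1+|\{t\in T:t\geq s_i\}|$ for $i\in[k]$, matching the notation of the preceding lemma. Because $s_1>s_2>\cdots>s_k$ and $T\subseteq S$, when $i$ grows by one the only element of $S$ newly included by the threshold condition $t\geq s_i$ is $s_i$ itself, so
\begin{equation*}
u_i(T)-u_{i-1}(T)=[s_i\in T]\in\{0,1\},
\end{equation*}
where $[\,\cdot\,]$ denotes the Iverson bracket. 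In particular the tuple $(u_0(T),u_1(T),\ldots,u_k(T))$ satisfies the recurrence stipulated in the theorem.

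Next, I would verify that $T\mapsto(u_0(T),u_1(T),\ldots,u_k(T))$ is a bijection from $2^S$ onto the set of admissible sequences in the statement: the inverse sends $(u_0,u_1,\ldots,u_k)$ to $T:=\{s_i:u_i=u_{i-1}+1\}$. Under this bijection,
\begin{equation*}
|T|=\sum_{i=1}^k(u_i-u_{i-1})=u_k-u_0=u_k-1,
\end{equation*}
so $(-1)^{|S-T|}=(-1)^{k-|T|}=(-1)^{k+1-u_k}$.

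Substituting~\eqref{eq:leq} into~\eqref{eq:IE} and translating the sum over $T\subseteq S$ into the sum over admissible sequences $(u_0,u_1,\ldots,u_k)$ then yields
\begin{equation*}
f_=(S,n)=\sum_{T\subseteq S}(-1)^{|S-T|}\prod_{i=1}^k u_i(T)^{d_i(S)}=\sum(-1)^{k+1-u_k}\prod_{i=1}^k u_i^{d_i(S)},
\end{equation*}
which is precisely the claimed identity. There is no substantial obstacle here; the only point requiring a little care is matching the base case $u_0=1$ of the theorem with the convention $u_0(T)=1$ and verifying that the unit increments $u_i-u_{i-1}$ encode membership of $s_i$ in $T$, after which the result is immediate from the two preceding formulas.
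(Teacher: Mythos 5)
Your proposal is correct and follows exactly the paper's route: the paper obtains Theorem~\ref{thm:Ma} by combining~\eqref{eq:IE} with~\eqref{eq:leq}, and your re-indexing of subsets $T\subseteq S$ by the sequences $(u_0,\ldots,u_k)$ via $u_i-u_{i-1}=[s_i\in T]$ and the sign computation $(-1)^{|S-T|}=(-1)^{k+1-u_k}$ simply make explicit the bookkeeping the paper leaves implicit.
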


Since $\D_{2n+1}=\DT_=(\{2,4,\ldots,2n\},2n+1)$, it follows from Theorem~\ref{thm:Ma} that 
$$
|\D_{2n+1}|=\sum(-1)^{n+1-u_{n}}(u_1u_2\cdots u_{n-1})^2u_n
$$
 summed over all $(u_0,u_1,u_2,\ldots,u_n)$ such that $u_0=1$ and $u_i$ equals $u_{i-1}$ or $u_{i-1}+1$ for $1\leq i\leq n$. As $u_n=u_{n-1}$ or $u_n=u_{n-1}+1$, the above summation is simplified to the right-hand side of~\eqref{exp:dum}, which proves  $ |\D_{2n+1}|=g_{n+1}$.
 
\section*{Acknowledgments}
 This work was supported by the National Science Foundation of China grants  11871247 and 12071440, and the project of Qilu Young Scholars of Shandong University.

 \end{document}